\newcounter{theorem}
\newtheorem{Theorem}[theorem]{Theorem}
\newcounter{corollary}
\newcounter{proposition}
\newcounter{lemma}
\newcounter{example}
\newcounter{remark}
\newtheorem{Remark}[remark]{Remark}
\newcounter{definition}
\newtheorem{Definition}[definition]{Definition}
\newcommand\blfootnote[1]{%
	\begingroup
	\renewcommand\thefootnote{}\footnote{#1}%
	\addtocounter{footnote}{-1}%
	\endgroup
}
\begin{document}

\title{A Necessary Optimality Condition for Extended Weighted 
Generalized Fractional Optimal Control Problems\blfootnote{This 
is a preprint version of the paper published open access in 
'Results in Control and Optimization' [https://doi.org/10.1016/j.rico.2023.100356].}}

\author{Houssine Zine$^{1}$\\       
\texttt{zinehoussine@ua.pt}
\and El Mehdi Lotfi$^{2}$\\         
\texttt{elmehdi.lotfi@univh2c.ma}
\and Delfim F. M. Torres$^{1,}$\thanks{Corresponding author.}\\
\texttt{delfim@ua.pt}              
\and Noura Yousfi$^{2}$\\          
\texttt{nourayousfi.fsb@gmail.com}}

\date{\mbox{$^{1}$Center for Research and Development in Mathematics and Applications (CIDMA),}
Department of Mathematics, University of Aveiro, 3810-193 Aveiro, Portugal\\[0.3cm]
$^{2}$Laboratory of Analysis, Modeling and Simulation (LAMS),\\
Faculty of Sciences Ben M'sik, Hassan II University of Casablanca,\\
P.O. Box 7955 Sidi Othman, Casablanca, Morocco}


\maketitle

\begin{abstract}
Using the recent weighted generalized fractional
order operators of Hattaf, a general fractional optimal control
problem without constraints on the values of the control functions
is formulated and a corresponding (weak) version of
Pontryagin's maximum principle is proved.
As corollaries, necessary
optimality conditions for Caputo--Fabrizio,
Atangana--Baleanu and weighted Atangana--Baleanu
fractional dynamic optimization problems
are trivially obtained. As an application,
the weighted generalized fractional problem
of the calculus of variations is investigated
and a new more general fractional Euler--Lagrange
equation is given.

\medskip

\noindent {\bf Keywords:}
extended weighted generalized fractional optimal control problems;
weak Pontryagin's maximum principle;
fractional calculus of variations;
fractional Euler--Lagrange equation and Hamiltonian approach;
weighted generalized integration by parts.

\medskip

\noindent {\bf MSC:} 26A33; 49K05.
\end{abstract}


\section{Introduction}
\label{sec:01}

In 1744, Euler published his ``Methodus inveniendi lineas curvas maximi
minimive proprietate gaudentes sive solutio problematis
isoperimetrici latissimo sensu accepti'' \cite{Euler}. In this work,
Euler seeks a method to find a curve that minimizes or maximizes any quantity
expressed by an integral, generalizing the problems studied before by the
Bernoulli brothers', but retaining the geometrical approach
developed by Johann Bernoulli to solve them. Euler found what is
now known as the Euler--Lagrange differential
equation, which is a necessary condition for a function to maximize or minimize
a given integral functional that depends on the unknown function
and its derivative. In 1760, Lagrange published an ``Essay on a new method
of determining the maxima and minima of indefinite integral formulas'' \cite{Lagrange},
giving the analytic method that allows to attack all types of variational problems
\cite{Pollard,MR3822307}.

In 1996, Riewe introduced the fractional calculus of variations \cite{MR1401316,Riewe},
from which many scholars reintroduced several variants of the fractional calculus of variations
and obtained some wonderful and useful fractional Euler--Lagrange
equations with some additional results: see, e.g., the works of
Agrawal \cite{Agrawal},
Almeida et al. \cite{Almeida},
Atanackovic et al. \cite{Atanackovic1,Atanackovic2},
Frederico and Torres \cite{Frederico},
Klimek \cite{Klimek},
Zine et al. \cite{Zine,integ},
just to mention a few relevant names and works.

At the end of the fifties of the 20th century, Pontryagin
and his collaborators formulated one of the most important tools
to characterize the optimal trajectories of a controllable system
\cite{Pontryagin}. In the particular case when the controls
are continuous functions without constraints on its values,
the necessary optimality conditions given by Pontryagin's
maximum principle (PMP) reduce to the classical Euler--Lagrange equations.
This is known as the weak PMP \cite{MyID:489}, in contrast with the
general (strong) PMP \cite{MyID:487}.

Fractional optimal control problems (FOCPs)
give an extension of the classical optimal control problems
(see, e.g., \cite{Kamocki}),
where not only the control system is described by some fractional
differential equation \cite{Tricaud}, but the cost functional may
be also provided by a fractional integral operator \cite{Ali}.

Motivated by the new generalized fractional operators explored by Hattaf \cite{hattaf} and
the weighted generalized fractional integration by parts formula proved by Zine et al. \cite{integ}
(see Section~\ref{sec:02}), together with the urgent need to use advanced techniques to quantify
the optimal controls for the considered optimization problems, we prove in Section~\ref{sec:03}
a weak version of a weighted generalized fractional Pontryagin's maximum principle,
supported by an adequate application to the calculus of variations (Section~\ref{sec:04}).
We end with Section~\ref{sec:06} of conclusions and possible future work.


\section{Preliminaries}
\label{sec:02}

In this section, we recall the necessary mathematical prerequisites
to formulate and prove our main theorem, which gives necessary optimality
conditions of Pontryagin type in the weighted generalized fractional setting.
We adopt here the following notations:
\begin{gather*}
\phi(\alpha):=\dfrac{1-\alpha}{B(\alpha)},
\quad \psi(\alpha):=\dfrac{\alpha}{B(\alpha)},
\quad \mu_\alpha:=\dfrac{\alpha}{1-\alpha},
\end{gather*}
where $0\leq\alpha< 1$ and $ B(\alpha)$ is a normalization function
obeying $B(0)=B(1)=1$.

Let $H^1(a, b)$ be the Sobolev space of order one defined as follows:
$$
H^1(a, b)=\left\{y\in L^2(a,b) : y'\in L^2(a,b) \right\}.
$$

\begin{Definition}[The left-weighted generalized fractional derivative and integral \cite{hattaf}]
Let $0\leq\alpha< 1$, $\beta>0$ and $f\in H^1(a, b)$. 
The left-weighted generalized fractional derivative of order $\alpha$ 
of function $f$, in the Riemann--Liouville sense, is defined by
\begin{equation}
\label{NGDR:g}
\,_{a,w}^{R}D^{\alpha,\beta}f(x)
= \dfrac{1}{\phi(\alpha)}\dfrac{1}{w(x)}\dfrac{d}{dx}\int^{x}_{a}(wf)(s)
E_\beta\left[ -\mu_{\alpha}(x-s)^{\beta}\right]ds,
\end{equation}
where $E_{\beta}$ denotes the Mittag--Leffler
function of parameter $\beta$ defined by
\begin{equation}
\label{mitag}
E_{\beta}(z)=\sum_{j=0}^{\infty}\dfrac{z^{j}}{\Gamma(\beta j+1)},
\end{equation}
and $w\in C^{1}([a,b])$ with $ w>0$.
The corresponding left fractional integral is defined by
\begin{equation}
\label{NGI:g}
_{a,w}I^{\alpha,\beta}f(x)
= \phi(\alpha)f(x)+\psi(\alpha)\,^{RL}_{a,w}I^{\beta}f(x),
\end{equation}
where $^{RL}_{a,w}I^{\beta}$ is the standard left weighted Riemann--Liouville
fractional integral of order $\beta$ given by
\begin{equation}
^{RL}_{a,w}I^{\beta}f(x)=\dfrac{1}{\Gamma(\beta)}
\dfrac{1}{w(x)}\int^{x}_{a}(x-s)^{\beta-1}w(s)f(s)ds,
\quad x > a.
\end{equation}
\end{Definition}

\begin{Definition}[The right-weighted generalized fractional derivative and integral \cite{integ}]
\label{NGDR:d}
Let $0\leq\alpha< 1$, $\beta>0$ and $f\in H^1(a, b)$. 
The right-weighted generalized fractional derivative
of order $\alpha$ of function $f$, in the Riemann--Liouville sense, is defined by
\begin{equation}
^{R}D^{\alpha,\beta}_{b,w}f(x)
= \dfrac{-1}{\phi(\alpha)}\dfrac{1}{w(x)}\dfrac{d}{dx}\int^{b}_{x}(wf)(s)
E_\beta\left[ -\mu_{\alpha}(s-x)^{\beta}\right]ds,
\end{equation}
where $w\in C^{1}([a,b])$ with $w>0$.
The corresponding fractional integral is defined by
\begin{equation}
\label{NGI:d}
I^{\alpha,\beta}_{b,w}f(x)
= \phi(\alpha)f(x)+\psi(\alpha)\,^{RL}I^{\beta}_{b,w}f(x),
\end{equation}
where $^{RL}I_{b,w}^{\beta}$ is the standard right weighted Riemann--Liouville
fractional integral of order $\beta$:
 \begin{equation}
^{RL}I_{b,w}^{\beta}f(x)=\dfrac{1}{\Gamma(\beta)}
\dfrac{1}{w(x)}\int^{b}_{x}(x-s)^{\beta-1}w(s)f(s)ds,
\quad x < b.
\end{equation}
\end{Definition}

\begin{Theorem}[The weighted generalized formula of integration by parts \cite{integ}]
\label{GIBP}
Let $f$, $g$ $\in$ $H^1(a,b)$ and $\omega \in C^1(a,b)$ with $\omega>0$.
If $0 \leq \alpha < 1$ and $\beta >0$, then
\begin{eqnarray}
\label{RDaw:8}	
\int_{a}^{b}f(x)\left(\,^RD^{\alpha,\beta}_{b,w}g\right)(x)dx
&=& \int_{a}^{b} w(x)^2g(x)\left(\,_{a,w}^RD^{\alpha,\beta}
\left(\dfrac{f}{w^2}\right)\right)(x)dx,\\
\int_{a}^{b}f(x)(\,^R_{a,w}D^{\alpha,\beta}g)(x)dx
&=& \int_{a}^{b} w(x)^2g(x)\left(\,^RD^{\alpha,\beta}_{b,w}
\left(\dfrac{f}{w^2}\right)\right)(x)dx.\label{RDaw}
\end{eqnarray}
\end{Theorem}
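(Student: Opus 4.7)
I would prove the first identity (\ref{RDaw:8}); the second identity (\ref{RDaw}) then follows by an entirely symmetric argument that swaps the roles of the left- and right-sided derivatives and reverses the orientation of the triangle of integration. The strategy rests on three ingredients: unfolding both operators via their definitions, ordinary integration by parts in $x$ to transfer the outer $d/dx$, and Fubini's theorem to swap the order of integration over a triangular region.

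Starting from the left-hand side of (\ref{RDaw:8}), I would substitute the definition of $\,^RD^{\alpha,\beta}_{b,w}g$ and pull the constant $-1/\phi(\alpha)$ and the factor $1/w(x)$ outside the $d/dx$, obtaining
\[
-\frac{1}{\phi(\alpha)} \int_a^b \frac{f(x)}{w(x)}\, \frac{d}{dx}\!\left[\int_x^b w(s) g(s)\, E_\beta\!\left[-\mu_{\alpha}(s-x)^{\beta}\right] ds\right] dx.
\]
First, integrate by parts in $x$: the boundary term at $x=b$ vanishes since the inner integral has equal limits there. Then apply Fubini's theorem over the triangle $\{(x,s):a\le x\le s\le b\}$ to swap the two integrations, producing an inner integral of the form $\int_a^s (f/w)'(x)\, E_\beta[-\mu_\alpha(s-x)^\beta]\,dx$. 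Integrate by parts once more in this inner integral and use the identity $\partial_x E_\beta[-\mu_\alpha(s-x)^\beta] = -\partial_s E_\beta[-\mu_\alpha(s-x)^\beta]$ together with Leibniz's rule to reassemble a single outer derivative $d/ds$ acting on $\int_a^s (f(x)/w(x))\,E_\beta[-\mu_\alpha(s-x)^\beta]\,dx$. Writing $f(x)/w(x) = w(x)\cdot f(x)/w(x)^2$, this is exactly the kernel of $\,_{a,w}^RD^{\alpha,\beta}(f/w^2)$, and the factor $w(s)g(s)$ already present combines with the remaining external $1/w$-factor to yield the $w(s)^2 g(s)$ on the right-hand side; a relabelling of the dummy variable closes the identity.

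The main obstacle is verifying that no boundary contributions survive. The first integration by parts leaves an $x=a$ term $-(f(a)/w(a))\int_a^b w(s)g(s)\,E_\beta[-\mu_\alpha(s-a)^\beta]\,ds$; the inner integration by parts contributes endpoint values $(f(a)/w(a))\,E_\beta[-\mu_\alpha(s-a)^\beta]$ at $x=a$ and $f(s)/w(s)$ at $x=s$, and pulling $\partial_s$ inside via Leibniz's rule subtracts a further $f(s)/w(s)$. Integrating these remnants against $w(s)g(s)\,ds$ and reassembling, the $f(a)$-contributions cancel exactly against the boundary term from the outer integration by parts, and the two $f(s)$-contributions cancel each other. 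This precise combinatorial cancellation—mirroring the boundary-free behaviour of the classical Riemann--Liouville fractional integration by parts—is the non-obvious step of the argument; everything else is routine manipulation.
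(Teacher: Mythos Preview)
The paper does not prove this theorem; it is quoted in the Preliminaries section with a citation to \cite{integ} (Zine, Lotfi, Torres, Yousfi, \emph{Axioms} 2022), so there is no in-paper proof to compare your attempt against.

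That said, your plan is sound. Writing $F=f/w$, $G=wg$ and $K(t)=E_\beta(-\mu_\alpha t^\beta)$, the identity reduces to
\[
-\int_a^b F(x)\,\frac{d}{dx}\!\int_x^b G(s)K(s-x)\,ds\,dx
=\int_a^b G(s)\,\frac{d}{ds}\!\int_a^s F(x)K(s-x)\,dx\,ds,
\]
and one outer integration by parts (the $x=b$ term vanishes), Fubini over $\{a\le x\le s\le b\}$, one inner integration by parts, and Leibniz's rule reassemble the right-hand side; the $F(a)K(s-a)$ remnant cancels against the outer boundary term, and the two $F(s)$ contributions cancel each other exactly as you say. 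The only point deserving an explicit remark is the range $0<\beta<1$: there $K'(t)$ behaves like $t^{\beta-1}$ near $t=0$, so the inner integration by parts and the differentiation under the integral sign involve a weakly singular kernel and should be justified as improper integrals (the singularity is integrable since $\beta>0$).
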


\begin{Remark}
It should be noted that the fractional integration by parts formulas
\eqref{RDaw:8} and \eqref{RDaw} are not always true: 
they only hold under certain conditions. In general, 
$f$ must be in $L^p$ while the second function $g$ must be in $L^q$ 
in such a way $1/p+1/q=1$. In our work these conditions are verified. 
Indeed, as we shall see, the state variable of our control system is in $H^1$, 
which means that it is in $L^2$, and the costate variable $\lambda$ 
is an absolutely continues function on a bounded and closed interval $[0,T]$, 
which means that it is also a $L^2$ function.
\end{Remark}


\section{Main Result}
\label{sec:03}

In this section we formulate an extended weighted generalized
fractional optimal control problem (EWGFOCP)
and derive necessary conditions for the optimality of the EWGFOCP.
Let $U \subset \mathbb{R}$ be a given nonempty open set. 
A control $u :$ $[0,T]$ $\longrightarrow$ $U$ is called
admissible if it is continuous on $[0,T]$. 
The set of all admissible controls is denoted by $U_{ad}$.
The problem consists to determine the optimal control $u(\cdot)$
and corresponding state trajectory $x(\cdot)$
that minimizes a given cost functional performance $J$,
\begin{equation}
\label{P1}
J[x,u]=\int_{a}^{b}L\left(t,x(t),u(t)\right)dt\longrightarrow \min,
\end{equation}
subject to a given dynamic control system,
\begin{equation}
\label{P11}
\,^{R}_{a,w}D^{\alpha,\beta}x(t)
=f\left(t,x(t),u(t)\right),
\quad t\in[a,b],
\end{equation}
assumed to be controllable, and given boundary conditions
\begin{equation}
\label{P111}
x(a)=x_a, \quad x(b)=x_b,
\end{equation}
where $L, f : [a,b]\times \mathbb{R}\times \mathbb{R} \rightarrow \mathbb{R}$
are differentiable functions with respect to all their arguments.

\begin{Theorem}[Necessary optimality condition for the EWGFOCP (\ref{P1})--(\ref{P111})]
\label{OC}
Let $0 \leq \alpha < 1$, $\beta >0$, and
$\left(x(\cdot),u(\cdot)\right)$ be a minimizer of problem
(\ref{P1})--(\ref{P111}). Then, there exists a function
$\lambda(\cdot)$ for which the triplet
$\left(x(\cdot),\lambda(\cdot),u(\cdot)\right)$ satisfies:
\begin{itemize}
\item[(i)] the Hamiltonian system
\begin{equation}
\label{eq:HS}
\left\{
\begin{array}{ll}
\,^{R}_{a,w}D^{\alpha,\beta}x(t)=\dfrac{\partial \mathcal{H}}{\partial
\lambda}\left(t, x(t),\lambda (t),u(t)\right), & t\in[a,b],\\[0.3cm]
w(t)^2\,^{R}D_{b,w}^{\alpha,\beta}\dfrac{\lambda(t)}{w(t)^2}
=\dfrac{\partial \mathcal{H}}{\partial x}\left(t, x(t),\lambda (t),u(t)\right),
& t\in[a,b];
\end{array}
\right.
\end{equation}
\item[(ii)] and the stationary condition
\begin{equation}
\label{eq:SC}
\dfrac{\partial \mathcal{H}}{\partial u}\left(t, x(t),\lambda (t),u(t)\right)=0,
\quad t\in[a,b],
\end{equation}
\end{itemize}
where $\mathcal{H}$ is a scalar function, called the Hamiltonian, defined by
\begin{equation}
\label{P3}
\mathcal{H}\left(t, x,\lambda,u\right)
= L\left(t,x,u\right)
+ \lambda f\left(t,x,u\right).
\end{equation}
\end{Theorem}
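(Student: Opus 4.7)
The plan is to apply the classical Lagrange multiplier technique adapted to the weighted generalized fractional setting. I would introduce a costate $\lambda(\cdot)$ and the augmented functional
\[
\tilde{J}[x,u,\lambda] = \int_a^b\Bigl[\mathcal{H}\bigl(t,x(t),\lambda(t),u(t)\bigr)-\lambda(t)\,^{R}_{a,w}D^{\alpha,\beta}x(t)\Bigr]dt.
\]
Since the integrand differs from $L$ by $\lambda\bigl(f-\,^{R}_{a,w}D^{\alpha,\beta}x\bigr)$, which vanishes along any trajectory satisfying \eqref{P11}, any minimizer of the constrained problem is an unconstrained stationary point of $\tilde{J}$ for every choice of $\lambda$.

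Next I would use the integration-by-parts formula \eqref{RDaw} of Theorem~\ref{GIBP}, with $f=\lambda$ and $g=x$, to move the left fractional derivative off the state:
\[
\int_a^b\lambda(t)\,^{R}_{a,w}D^{\alpha,\beta}x(t)\,dt
= \int_a^b w(t)^2\, x(t)\,^{R}D_{b,w}^{\alpha,\beta}\!\Bigl(\frac{\lambda}{w^2}\Bigr)(t)\,dt.
\]
After this substitution $\tilde{J}$ no longer contains a fractional derivative of $x$. Then I would take admissible variations $x+\varepsilon h$ and $u+\varepsilon k$ with $h,k$ continuous on $[a,b]$ and $h(a)=h(b)=0$ so that the boundary conditions \eqref{P111} are preserved (the perturbation $k$ can be chosen arbitrarily because $U$ is open), differentiate $\tilde{J}$ at $\varepsilon=0$, and impose stationarity to obtain
\[
\int_a^b\!\left[\left(\frac{\partial\mathcal{H}}{\partial x}-w(t)^2\,^{R}D_{b,w}^{\alpha,\beta}\!\Bigl(\frac{\lambda}{w^2}\Bigr)(t)\right)h(t)+\frac{\partial\mathcal{H}}{\partial u}\,k(t)\right]dt = 0.
\]

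I would then fix $\lambda$ by demanding that the coefficient of $h$ vanish pointwise, which is exactly the adjoint equation appearing as the second line of \eqref{eq:HS}. With this choice, the residual identity reduces to $\int_a^b(\partial\mathcal{H}/\partial u)\,k(t)\,dt=0$ for every continuous $k$, and the fundamental lemma of the calculus of variations yields the stationary condition \eqref{eq:SC}. The first line of \eqref{eq:HS} is merely the state equation \eqref{P11} rewritten via $\partial\mathcal{H}/\partial\lambda=f$.

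The main technical point is the careful bookkeeping of the weighted generalized integration by parts: transferring the left operator $^{R}_{a,w}D^{\alpha,\beta}$ off $x$ produces the right operator $^{R}D_{b,w}^{\alpha,\beta}$ acting on $\lambda/w^2$ and rescaled by $w(t)^2$, which accounts for the somewhat unusual weight-dependent form of the costate equation. One must also verify that the hypotheses of Theorem~\ref{GIBP} hold; as emphasized in the Remark following that theorem, this reduces to having $x\in H^1\subset L^2$ (granted by assumption) and $\lambda$ absolutely continuous on $[a,b]$, hence also in $L^2$.
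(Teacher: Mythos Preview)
Your proposal is correct and follows essentially the same route as the paper: introduce the augmented functional with a multiplier $\lambda$, take first variations, apply the weighted generalized integration-by-parts formula \eqref{RDaw} to shift ${}^{R}_{a,w}D^{\alpha,\beta}$ onto $\lambda/w^2$, and read off the three optimality conditions from the vanishing of the coefficients. The only cosmetic difference is ordering: you integrate by parts in $\tilde{J}$ \emph{before} taking variations and then choose $\lambda$ to kill the $h$-coefficient, whereas the paper first writes $\delta J_a$ (also varying $\lambda$ to recover the state equation explicitly) and then integrates by parts on the term $\lambda\,{}^{R}_{a,w}D^{\alpha,\beta}\delta x$; by linearity of the operator these are equivalent.
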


\begin{proof}
We consider the following augmented
functional of the performance index:
\begin{equation}
\label{aug}
J_a[x,\lambda,u]=\int_{a}^{b}\left[\mathcal{H}\left(t, x(t),\lambda (t),u(t)\right)
-\lambda (t) f\left(t,x(t),u(t)\right)\right]dt.
\end{equation}
Regarding the variation of the augmented performance index (\ref{aug}),
\begin{equation}
\label{ogmented}
\delta J_{a}[x,\lambda,u]=\int_{a}^{b}\left\{\dfrac{\partial
\mathcal{H}}{\partial x}\delta x(t)
+ \left[\dfrac{\partial \mathcal{H}}{\partial \lambda}
- \,^{R}_{a,w}D^{\alpha,\beta}x(t)\right]\delta \lambda(t)
+ \dfrac{\partial \mathcal{H}}{\partial u}\delta u(t)
- \lambda(t) \,^{R}_{a,w}D^{\alpha,\beta}\delta x(t) \right\}dt.
\end{equation}
Using the weighted generalized fractional integration
by parts formula \eqref{RDaw} of Theorem~\ref{GIBP}, we get
\begin{equation}
\int_{a}^{b}\left\{\lambda(t) \,^{R}_{a,w}
D^{\alpha,\beta}\delta x(t) \right\}dt
=\int_{a}^{b}\left\{w(t)^2\delta x(t)\,^{R}D_{b,w}^{\alpha,\beta}
\frac{\lambda(t)}{w(t)^2}\right\}dt, 
\end{equation}
since $\lambda(t), x(t) \in L^2(a,b)$.
Then the equality (\ref{ogmented}) can be written as
\begin{equation}
\label{varogmented}
\delta J_{a}[x,\lambda,u]=\int_{a}^{b}\left\{\left[
\dfrac{\partial \mathcal{H}}{\partial x}-w(t)^2\,^{R}D_{b,w}^{\alpha,\beta}
\frac{\lambda(t)}{w(t)^2}\right]\delta x(t)
+ \left[\dfrac{\partial \mathcal{H}}{\partial \lambda}
- \,^{R}_{a,w}D^{\alpha,\beta}x(t)\right]\delta \lambda (t)
+ \dfrac{\partial \mathcal{H}}{\partial u}\delta u(t)\right\}dt.
\end{equation}
To obtain the necessary optimality conditions for the EWGFOCP,
the equality $\delta J_{a}=0$ should be verified
for all independent variations $\delta x(t)$, $\delta \lambda(t)$
and $\delta u(t)$, which requires that the corresponding coefficients must vanish. So,
\begin{eqnarray*}
w(t)^2\,^{R}D_{b,w}^{\alpha,\beta}\dfrac{\lambda(t)}{w(t)^2}
&=&\dfrac{\partial \mathcal{H}}{\partial x}\left(t, x(t),
\lambda (t),u(t)\right), \quad t\in[a,b],\\
\,^{R}_{a,w}D^{\alpha,\beta}x(t)
&=&\dfrac{\partial \mathcal{H}}{\partial \lambda}
\left(t, x(t),\lambda (t),u(t)\right), \quad t\in[a,b],\\
\dfrac{\partial \mathcal{H}}{\partial u}\left(t, x(t),\lambda (t),u(t)\right)
&=&0, \quad t\in[a,b],
\end{eqnarray*}
which completes the proof.
\end{proof}

\begin{Remark}
\label{rk}
As direct corollaries of Theorem~\ref{OC}, one can obtain
weak versions of Pontryagin's maximum principle for several
different fractional operators:
\begin{enumerate}
\item if $w(t) \equiv 1$ and $\beta=1$, then we obtain
a weak Caputo--Fabrizio fractional Pontryagin's maximum
principle with Hamiltonian system
\begin{equation}
\left\{
\begin{array}{ll}
\,^{R}_{a,1}D^{\alpha,1}x(t)=\dfrac{\partial \mathcal{H}}{\partial
\lambda}\left(t, x(t),\lambda (t),u(t)\right), & t\in[a,b],\\[0.3cm]
\,^{R}D_{b,1}^{\alpha,1}\lambda(t)=\dfrac{\partial
\mathcal{H}}{\partial x}\left(t, x(t),\lambda (t),u(t)\right), & t\in[a,b];
\end{array}
\right.
\end{equation}

\item if $w(t)\equiv 1$ and $\beta=\alpha$, then
we obtain a weak Atangana--Baleanu fractional Pontryagin's
maximum principle with Hamiltonian system
\begin{equation}
\left\{
\begin{array}{ll}
\,^{R}_{a,1}D^{\alpha,\alpha}x(t)=\dfrac{\partial \mathcal{H}}{\partial
\lambda}\left(t, x(t),\lambda (t),u(t)\right), & t\in[a,b],\\[0.3cm]
\,^{R}D_{b,1}^{\alpha,\alpha}\lambda(t)=\dfrac{\partial
\mathcal{H}}{\partial x}\left(t, x(t),\lambda (t),u(t)\right), & t\in[a,b];
\end{array}
\right.
\end{equation}

\item if $\beta=\alpha$, then we obtain a weak weighted Atangana--Baleanu
fractional Pontryagin's maximum principle with Hamiltonian system
\begin{equation}
\left\{
\begin{array}{ll}
\,^{R}_{a,w}D^{\alpha,\alpha}x(t)=\dfrac{\partial \mathcal{H}}{\partial
\lambda}\left(t, x(t),\lambda (t),u(t)\right), & t\in[a,b],\\[0.3cm]
w(t)^2\,^{R}D_{b,w}^{\alpha,\alpha}\dfrac{\lambda(t)}{w(t)^2}
=\dfrac{\partial \mathcal{H}}{\partial x}\left(t, x(t),
\lambda (t),u(t)\right), & t\in[a,b].
\end{array}
\right.
\end{equation}
\end{enumerate}
\end{Remark}


\section{Application: A Fractional Euler--Lagrange Equation}
\label{sec:04}

Let $L : [a,b] \times\mathbb{R}
\times \mathbb{R}\rightarrow\mathbb{R}$
be a differentiable function with respect to all its arguments
and consider the following fundamental problem
of the calculus of variations:
\begin{equation}
\label{eq:F}
J[x]=\int_a^b L\left( t,x(t), ^{R}D^{\alpha,\beta}_{a,w}x(t)\right)dt
\longrightarrow \min
\end{equation}
subject to the boundary conditions
\begin{equation}
\label{eq:BC}
x(a)=x_a, \quad x(b)=x_b.
\end{equation}

\begin{Theorem}[The weighted generalized fractional Euler--Lagrange equation]
\label{thm:SFE-Leq}
If $x$ is a minimizer of problem \eqref{eq:F}--\eqref{eq:BC},
then $x$ satisfies the following weighted generalized
fractional Euler--Lagrange equation:
\begin{equation}
\label{Euler-Lagrange}
\dfrac{\partial L}{\partial x} + w(t)^2\,^{R}D_{b,w}^{\alpha,\beta}
\dfrac{\dfrac{\partial L}{\partial x}}{w(t)^2}=0.
\end{equation}
\end{Theorem}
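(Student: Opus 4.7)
The plan is to reduce the fundamental problem of the calculus of variations \eqref{eq:F}--\eqref{eq:BC} to an instance of the optimal control problem \eqref{P1}--\eqref{P111} and then invoke Theorem~\ref{OC} directly. Specifically, I would introduce the auxiliary control variable
\begin{equation*}
u(t) := \,^{R}_{a,w}D^{\alpha,\beta}x(t),
\end{equation*}
so that the dynamics \eqref{P11} take the trivial form $\,^{R}_{a,w}D^{\alpha,\beta}x(t) = f(t,x(t),u(t))$ with $f(t,x,u) = u$, while the cost functional becomes $J[x,u] = \int_a^b L(t,x(t),u(t))\,dt$ with the same boundary conditions \eqref{eq:BC}. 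Any minimizer $x$ of \eqref{eq:F}--\eqref{eq:BC} lifts to a minimizer $(x,u)$ of the associated EWGFOCP.

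Next, I would compute the Hamiltonian prescribed by \eqref{P3}, obtaining
\begin{equation*}
\mathcal{H}(t,x,\lambda,u) = L(t,x,u) + \lambda u,
\end{equation*}
and apply Theorem~\ref{OC}. The stationary condition \eqref{eq:SC} gives
\begin{equation*}
\frac{\partial L}{\partial u}(t,x(t),u(t)) + \lambda(t) = 0,
\qquad \text{i.e. } \lambda(t) = -\frac{\partial L}{\partial u}(t,x(t),u(t)),
\end{equation*}
while the second equation of the Hamiltonian system \eqref{eq:HS} reads
\begin{equation*}
w(t)^2\,^{R}D_{b,w}^{\alpha,\beta}\frac{\lambda(t)}{w(t)^2} = \frac{\partial L}{\partial x}(t,x(t),u(t)).
\end{equation*}

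Finally, I would substitute the expression for $\lambda(t)$ coming from the stationary condition into the costate equation. Pulling the minus sign through the (linear) operator $w^2\,^{R}D_{b,w}^{\alpha,\beta}(\cdot/w^2)$ and rearranging yields exactly the claimed Euler--Lagrange equation \eqref{Euler-Lagrange}, after restoring $u(t) = \,^{R}_{a,w}D^{\alpha,\beta}x(t)$ in the partial derivatives of $L$. The only step that requires a brief argument is justifying the linearity of the right weighted generalized fractional operator when it is composed with division by $w^2$, which is immediate from Definition~\ref{NGDR:d}; there is no real obstacle since the heavy lifting (the integration by parts formula of Theorem~\ref{GIBP}) was already carried out inside the proof of Theorem~\ref{OC}.
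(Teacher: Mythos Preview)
Your approach is essentially identical to the paper's: both introduce $u(t):=\,^{R}_{a,w}D^{\alpha,\beta}x(t)$, take $f(t,x,u)=u$, apply Theorem~\ref{OC}, and then combine the stationary condition $\lambda=-\partial L/\partial u$ with the costate equation to eliminate $\lambda$. Note that, exactly as in the paper's own computation \eqref{eq:prof:ELeq}, what this argument actually produces is $\dfrac{\partial L}{\partial x} + w(t)^2\,^{R}D_{b,w}^{\alpha,\beta}\dfrac{\partial L/\partial u}{w(t)^2}=0$; the occurrence of $\partial L/\partial x$ in the second term of \eqref{Euler-Lagrange} is a typographical slip in the statement rather than a flaw in your reasoning.
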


\begin{proof}
Set $u(t) := {^{R}_{a,w}D}^{\alpha,\beta}x(t)$.
The performance index of (\ref{eq:F}) becomes equivalent to the
objective functional
\begin{equation}
\label{eq:FF}
J[x,u]=\int_a^b L\left( t,x(t),u(t)\right)dt
\end{equation}
subject to a control system \eqref{P11} with
$f\left(t,x,u\right) = u$.
Applying our main Theorem~\ref{OC} to the current problem,
that is, using the Hamiltonian system \eqref{eq:HS}
and the stationary condition \eqref{eq:SC}, we have
\begin{equation}
\label{eq:prof:ELeq}
\begin{split}
\dfrac{\partial L}{\partial x} + w(t)^2\,^{R}D_{b,w}^{\alpha,\beta}
\dfrac{\dfrac{\partial L}{\partial u}}{w(t)^2}
&= \dfrac{\partial L}{\partial x}
+ w(t)^2\,^{R}D_{b,w}^{\alpha,\beta}
\dfrac{\dfrac{\partial H}{\partial u}-\lambda}{w(t)^2}
= \dfrac{\partial L}{\partial x} 
+ w(t)^2\,^{R}D_{b,w}^{\alpha,\beta}\dfrac{-\lambda}{w(t)^2}\\
&= \dfrac{\partial L}{\partial x} - \dfrac{\partial H}{\partial x}.
\end{split}
\end{equation}
Since
\begin{eqnarray*}
\mathcal{H}\left(t, x(t),\lambda (t),u(t)\right)
&=&\lambda (t) f\left(t,x(t),u(t)\right)+L\left(t,x(t),u(t)\right)
= \lambda (t) ^{R}_{a,w}D^{\alpha,\beta}x(t) +L\left(t,x(t),u(t)\right)\\
&=& \lambda (t) u(t) +L\left(t,x(t),u(t)\right),
\end{eqnarray*}
it follows from \eqref{eq:prof:ELeq} that
$$
\dfrac{\partial L}{\partial x} + w(t)^2\,^{R}D_{b,w}^{\alpha,\beta}
\dfrac{\dfrac{\partial L}{\partial x}}{w(t)^2}=0,
$$
which is the new Euler--Lagrange equation (\ref{Euler-Lagrange}).
\end{proof}	

\begin{Remark}
According to the cases listed in Remark~\ref{rk},
we can obtain several interesting Euler--Lagrange equations
as corollaries of Theorem~\ref{thm:SFE-Leq}, namely,
the Caputo--Fabrizio, Atangana--Baleanu,
and weighted Atangana--Baleanu fractional Euler--Lagrange equations.
\end{Remark}


\section{Conclusion}
\label{sec:06}

Using as mathematical tools some recent results of the literature
related to the fractional calculus theory with non singular kernels, 
we formulated a general fractional optimal control problem without
constraints on the values of the control functions, and proved
a necessary optimality condition of Pontryagin type.
As an application, a general fractional Euler--Lagrange equation
was obtained, showing the usefulness and interest of our main result.
Our results open many future possibilities for future research:
to extend our scalar problems and results to the vectorial case,
to generalize the integral cost functional to be given
as a fractional integral operator, etc. In principle,
all the developments done to the fractional calculus
of variations after the works of Riewe, see
\cite{MR3443073} and references therein, can now be
done for extended weighted generalized fractional
variational problems. The objective of our current study was 
to setup a general theoretical result related to a new formulation 
of the generalized weighted fractional optimal control problem. 
The idea was to contribute with theoretical tools to construct 
a generalized fractional theory of optimal control. As future work,
it would be interesting to develop numerical schemes for such kind of problems.


\section*{Acknowledgments}

This research was funded by Funda\c{c}\~{a}o para a Ci\^{e}ncia
e a Tecnologia (FCT), grant number UIDB/04106/2020 (CIDMA).
The authors are grateful to two anonymous reviewers and an editor
for valuable comments and suggestions, which helped them 
to improve the contents of the submitted version of the article.


\section*{Data availability statement}

Data sharing is not applicable to this article 
as no new data were created or analyzed in this study.



\end{document}